\newtheorem{thm}{Theorem}
\newtheorem{cor}{Corollary}
\newtheorem{lem}{Lemma}
\newtheorem{rem}{Remark}
\newtheorem{conj}{Conjecture}
\newtheorem{prob}{Problem}
\theoremstyle{definition}
\newtheorem{defn}{Definition}[section]
\newtheorem{example}{Example}
\newenvironment{pf}[1][]{%
 \vskip 1mm
 \noindent
 \ifthenelse{\equal{#1}{}}%
  {{\slshape Proof. }}%
  {{\slshape #1.} }%
 }%
{\qed\bigskip}
\newcounter{alphabet}
\newcounter{tmp}
\newenvironment{Thm}[1][]{\refstepcounter{alphabet}%
\bigskip%
\noindent%
{\bf Theorem \Alph{alphabet}}%
\ifthenelse{\equal{#1}{}}{}{ (#1)}%
{\bf .} \itshape}{\vskip 8pt}
\renewcommand{\Ref}[1]{\@ifundefined{r@#1}{}{\setcounter{tmp}{\ref{#1}}\Alph{tmp}}}
\newenvironment{Lem}[1][]{\refstepcounter{alphabet}%
\bigskip%
\noindent%
{\bf Lemma \Alph{alphabet}}%
{\bf .} \itshape}{\vskip 8pt}
\newcommand{\IC}{{\mathbb C}}
\newcommand{\ID}{{\mathbb D}}
\def\be{\begin{equation}}
\def\ee{\end{equation}}
\newcommand{\bee}{\begin{enumerate}}
\newcommand{\eee}{\end{enumerate}}
\newcommand{\blem}{\begin{lem}}
\newcommand{\elem}{\end{lem}}
\newcommand{\bthm}{\begin{thm}}
\newcommand{\ethm}{\end{thm}}
\newcommand{\bcor}{\begin{cor}}
\newcommand{\ecor}{\end{cor}}
\newcommand{\beg}{\begin{example}}
\newcommand{\eeg}{\end{example}}
\newcommand{\begs}{\begin{examples}}
\newcommand{\eegs}{\end{examples}}
\newcommand{\bdefe}{\begin{defn}}
\newcommand{\edefe}{\end{defn}}
\newcommand{\bprob}{\begin{prob}}
\newcommand{\eprob}{\end{prob}}
\newcommand{\bques}{\begin{ques}}
\newcommand{\eques}{\end{ques}}
\newcommand{\bei}{\begin{itemize}}
\newcommand{\eei}{\end{itemize}}
\newcommand{\bcon}{\begin{conj}}
\newcommand{\econ}{\end{conj}}
\newcommand{\bcons}{\begin{conjs}}
\newcommand{\econs}{\end{conjs}}
\newcommand{\bprop}{\begin{propo}}
\newcommand{\eprop}{\end{propo}}
\newcommand{\br}{\begin{rem}}
\newcommand{\er}{\end{rem}}
\newcommand{\brs}{\begin{rems}}
\newcommand{\ers}{\end{rems}}
\newcommand{\bo}{\begin{obser}}
\newcommand{\eo}{\end{obser}}
\newcommand{\bos}{\begin{obsers}}
\newcommand{\eos}{\end{obsers}}
\newcommand{\bpf}{\begin{pf}}
\newcommand{\epf}{\end{pf}}
\newcommand{\ba}{\begin{array}}
\newcommand{\ea}{\end{array}}
\newcommand{\beq}{\begin{eqnarray}}
\newcommand{\beqq}{\begin{eqnarray*}}
\newcommand{\eeq}{\end{eqnarray}}
\newcommand{\eeqq}{\end{eqnarray*}}
\newcommand{\ra}{\rightarrow}
\def\cc{\setcounter{equation}{0}   % THIS CLEARS THE COUNTER
\setcounter{figure}{0}\setcounter{table}{0}}
\newcounter{minutes}\setcounter{minutes}{\time}
\newcounter{hours}\setcounter{hours}{\time}
\begin{document}
\bibliographystyle{amsplain}
\title[Improved Bohr's inequality for mappings defined on simply connected domains]{Improved Bohr's inequality for simply connected domains}
%[Rotation of the analytic part of univalent harmonic mappings]{Rotation of the analytic part of univalent harmonic mappings}

%=========================================================================
\thanks{%$^\dagger$
File:~\jobname .tex,
          printed: \number\day-\number\month-\number\year,
          \thehours.\ifnum\theminutes<10{0}\fi\theminutes}
%=========================================================================

%\author{S. Ponnusamy}
%\address{S. Ponnusamy, Department of Mathematics,
%Indian Institute of Technology Madras, Chennai-600 036, India.}
%\email{samy@iitm.ac.in}

%\author{Stavros Evdoridis, Saminathan Ponnusamy and Antti Rasila}

%\address{S. Ponnusamy,
%Indian Statistical Institute (ISI), Chennai Centre, SETS (Society
%for Electronic Transactions and Security), MGR Knowledge City, CIT
%Campus, Taramani, Chennai 600 113, India.
%}
%\email{samy@isichennai.res.in, samy@iitm.ac.in}

\author[S. Evdoridis]{Stavros Evdoridis}
\address{S. Evdoridis, Aalto University,
Department of Mathematics and Systems Analysis, P. O. Box 11100,
FI-00076 Aalto, Finland.}
\email{stavros.evdoridis@aalto.fi}

\author[S. Ponnusamy]{Saminathan Ponnusamy}
\address{S. Ponnusamy, Department of Mathematics,
Indian Institute of Technology Madras, Chennai-600 036, India}
\email{samy@iitm.ac.in}

\author[A. Rasila]{Antti Rasila${}^{~\mathbf{*}}$}
\address{A. Rasila,  Technion -- Israel Institute of Technology, Guangdong Technion,
241 Daxue Road, Shantou 515063, Guangdong, People's Republic of China} \email{antti.rasila@iki.fi; antti.rasila@gtiit.edu.cn}

%antti.rasila@aalto.fi

\subjclass[2000]{Primary: 30A10, 30H05, 30C35
%; Secondary: 30C45
%31A05, 30C45; Secondary: 30C20, 30C35
}
\keywords{Bounded analytic functions, harmonic functions, locally univalent functions and Bohr radius.
\\
${}^{\mathbf{*}}$ Corresponding author
%${} ^\dagger$ {\tt This author is on leave from the Department of Mathematics,
%Indian Institute of Technology Madras, Chennai-600 036, India
}

%\dedicatory{}
\begin{abstract}
In this paper, we study the Bohr phenomenon for functions that are defined on a general simply connected domain of the complex plane.
We improve known results of R. Fournier and St. Ruscheweyh for a class of analytic functions. Furthermore, we examine the case where a harmonic
mapping is defined in a disk containing $\mathbb{D}$ and obtain a Bohr type inequality.
\end{abstract}

\thanks{The research was supported by Academy of Finland and NNSF of China (No. 11971124)
%of I.R. Kayumov was supported by RFBR, Proj. 14-01-00351, and the research of S. Ponnusamy was supported
%by the project RUS/RFBR/P-163 under Department of Science \& Technology (India).
}

\maketitle
\pagestyle{myheadings}
\markboth{S. Evdoridis, S. Ponnusamy, and  A. Rasila}{Bohr's inequality for simply connected domains}
\cc

\section{Introduction and Main Results}
Let $\ID (a;r)=\{z:\,|z-a|<r\}$, and let  $\ID :=\ID (0;1)$, the open unit disk in the complex plane $\IC$.
For a given simply connected domain $\Omega$ containing $\ID$, let ${\mathcal H}(\Omega)$ denote the class of
analytic functions on $\Omega$, and let ${\mathcal B}(\Omega)$ be the class of functions $f\in{\mathcal H}(\Omega)$
such that $  f(\Omega ) \subseteq \overline{\ID}$. The Bohr radius for the family ${\mathcal B}(\Omega)$ is defined to be
the positive real number $B=B_\Omega \in (0,1)$ given by (see \cite{FourRush-2010})
$$B=\sup \{r\in (0,1):\, M_f(r)\leq 1 ~\mbox{ for all $f(z)=\sum_{n=0}^{\infty} a_n z^n \in {\mathcal B}(\Omega)$, $z\in\ID$}\},
$$
where $M_f(r)=\sum_{n=0}^{\infty} |a_n|\, r^n$ is the majorant series associated with $f\in {\mathcal B}(\Omega)$ in $\ID$.
If $\Omega =\ID$, then it is well-known that $B_{\ID}=1/3$, and it is
described precisely as follows:

\begin{Thm} \label{EPR2-thA} {\rm (The Classical Bohr (radius 1/3) Theorem)}
If $f\in {\mathcal B}(\ID)$,  then  $M_f(r)\le 1$ for $0\leq r\leq 1/3$. The number $1/3$ is best possible.
\end{Thm}

The inequality $M_f(r)\le 1$, for $f\in {\mathcal B}(\ID)$, fails to hold for any $r>1/3$. This can be seen by considering
the function $\varphi_a (z)=(a-z)/(1-az)$ and by taking $a\in (0,1)$ such that $a$ sufficiently close to $1$.

Theorem \Ref{EPR2-thA} was originally obtained by H. Bohr in 1914 \cite{Bohr-14} for $0\leq r\leq 1/6$.
The optimal value $1/3$, which is called the Bohr radius for the disk case, was
later established independently by M. Riesz, I. Schur, and F.W. Wiener. Proofs have also been given by Sidon \cite{Sidon-27-15}
and  Tomi\'c \cite{Tomic-62-16}.
%This article of Bohr generates intensive research activity in the recent years.
Over the past two decades there has been significant interest in Bohr-type inequalities. See
\cite{AlKayPON-19, BenDahKha, BS1962, BomBor-04, DFOOS, EvPoRa-19, FourRush-2010,
GarMasRoss-2018, KayPon1, KaPo-18, KayPon2, KP-AASFM2-19, LP2018, LPW2020, LP2019, PVW, PW}
and the references therein. The paper \cite{BoKha1997} carried Bohr's theorem to prominence for the case of several complex variables. A series of papers by several authors followed this article extending and generalizing this phenomenon
to many different situations. Readers are referred to \cite{AAPon1} and \cite[Chapter 8]{GarMasRoss-2018}
for more information about Bohr's inequality and related investigations.

For $0\leq \gamma <1$, we consider the disk $\Omega_{\gamma}$ defined by
$$\Omega_{\gamma}=\left \{z\in \mathbb{C}:\, \left |z+\frac{\gamma}{1-\gamma}\right |<\frac{1}{1-\gamma}\right \}.
$$
It is clear that the unit disk $\ID$ is always a subset of $\Omega_{\gamma}$.
In 2010, Fournier and Ruscheweyh \cite{FourRush-2010} extended Bohr's inequality in the following form.

\begin{Thm}{\rm (\hspace{1sp}\cite[Theorem 1]{FourRush-2010})}\label{FR2010-thB}
For $0\leq \gamma <1$, let $f\in {\mathcal B}(\Omega_\gamma)$ with $f(z)=\sum_{n=0}^\infty a_nz^n$ in $\mathbb{D}$. Then,
$$\sum_{n=0}^\infty |a_n|r^n \leq 1  \text{ for } r\leq \rho_\gamma :=\frac{1+\gamma}{3+\gamma}.
$$
Moreover, $\sum_{n=0}^\infty |a_n|\rho_{\gamma}^n =1$ holds for a function $f(z)=\sum_{n=0}^\infty a_nz^n$ in
${\mathcal B}(\Omega_\gamma)$ if and only if $f(z)=c$ with $|c|=1$.
\end{Thm}

We are now in a position to state an improved version of this result.

\bthm\label{EPR2-th1}
For $0\leq \gamma <1$, let $f\in {\mathcal B}(\Omega_\gamma)$ with $f(z)=\sum_{n=0}^\infty a_nz^n$ in $\mathbb{D}$. Then, we have
$$\sum_{n=0}^\infty |a_n|r^n +  \frac{8}{9}\left(\frac{S_{r(1-\gamma)}}{\pi}\right) \leq 1  \text{ for } r\leq \frac{1+\gamma}{3+\gamma},
$$
where $S_r$ denotes the area of the image of the disk $\ID(0;r)$ under the mapping $f$. Moreover, the inequality is strict unless
$f$ is a constant function. The bound  $8/9$ and the number $(1+\gamma)/(3+\gamma)$ cannot be replaced by a larger quantity.
\ethm

\bcon
We conjecture that the constant $8/9$ can be replaced by a decreasing function $t(\gamma)$ from $[0,1)$ onto $[8/9, 16/9)$.
\econ

For an analytic function $f(z)=\sum_{n=0}^\infty a_n z^n$ in the unit disk, we write
$$\|f_0\|_r = \sum_{n=1}^\infty |a_n|^2 r^{2n},
$$
where $f_0(z)=f(z)-f(0)$. Very recently, the authors have shown in  \cite{PVW} an improvement of Theorem \Ref{EPR2-thA}:
if $f\in{\mathcal B}(\ID)$, then for every $r\leq 1/3$
$$\sum_{n=0}^\infty |a_n|r^n +\left( \frac{1}{1+|a_0|}+ \frac{r}{1-r}\right)\|f_0\|_r \leq 1.
$$
In the next result we extend this improved version to $ {\mathcal B}(\Omega_\gamma)$
and thus, we have the following refinement of Theorem \Ref{FR2010-thB}.

\begin{thm}\label{EPR2-th2}
For $0\leq \gamma <1$, let $f\in {\mathcal B}(\Omega_\gamma)$ with $f(z)=\sum_{n=0}^\infty a_nz^n$ in $\mathbb{D}$. Then, we have
$$\sum_{n=0}^\infty |a_n|r^n +\left( \frac{1}{1+|a_0|}+ \frac{r}{1-r}\right)\|f_0\|_r \leq 1 \text{  for  } r\leq r_0:= \frac{1+\gamma}{3+\gamma},$$
and the number $r_0$ cannot be improved.
\end{thm}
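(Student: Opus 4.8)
The plan is to pull the problem back to the unit disk and to replace the Schwarz--Pick bound $|a_n|\le 1-|a_0|^2$ (which drives the disk case in \cite{PVW}) by its $\Omega_\gamma$-analogue. Since $\psi(w)=(w-\gamma)/(1-\gamma)$ maps $\ID$ conformally onto $\Omega_\gamma$, the map $g:=f\circ\psi$ belongs to ${\mathcal B}(\ID)$; writing $g(w)=\sum_{n\ge0}c_n(w-\gamma)^n$ and using $f(z)=g((1-\gamma)z+\gamma)$ gives $a_n=(1-\gamma)^nc_n$. Hence, with $a:=|a_0|$ and $A:=1-a^2$, the whole statement reduces to a Bohr-type estimate for the Taylor coefficients of the bounded function $g$ about the interior point $\gamma$.

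First I would establish the two coefficient estimates that power the argument. Composing once more with the disk automorphism $\varphi(u)=(u+\gamma)/(1+\gamma u)$, which sends $0$ to $\gamma$, produces $G:=g\circ\varphi=\sum_{k\ge0}B_ku^k\in{\mathcal B}(\ID)$ with $G(0)=a_0$. Averaging $G$ over the $k$-th roots of unity reduces matters to the first-coefficient Schwarz--Pick inequality and yields $|B_k|\le A$ for every $k\ge1$. Substituting $w=\gamma+v$ in $g=G\circ\varphi^{-1}$ gives the identity
$$\sum_{n\ge1}c_nv^n=\sum_{k\ge1}B_k\left(\frac{v}{1-\gamma^2-\gamma v}\right)^k,$$
where each power $\big(v/(1-\gamma^2-\gamma v)\big)^k$ expands with nonnegative coefficients because $0\le\gamma<1$. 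This nonnegativity legitimizes a termwise majorization by $A$; summing the geometric series $\sum_{k\ge1}\big(v/(1-\gamma^2-\gamma v)\big)^k=v/\big((1+\gamma)(1-\gamma-v)\big)$ and reading off the coefficient of $v^n$ then gives the pointwise bound $|a_n|\le A/(1+\gamma)$ for $n\ge1$, while evaluating the same majorant at $v=(1-\gamma)s$ gives, for every $0\le s<1$, the master estimate
$$\sum_{n\ge1}|a_n|s^n\le\frac{A}{1+\gamma}\cdot\frac{s}{1-s}.$$
Verifying this nonnegativity and these two bounds is the technical heart of the proof.

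With these in hand the assembly is routine. Applying the master estimate with $s=r$ bounds $\sum_{n\ge1}|a_n|r^n$, while combining the pointwise bound $|a_n|\le A/(1+\gamma)$ with the master estimate at $s=r^2$ gives $\|f_0\|_r=\sum_{n\ge1}|a_n|^2r^{2n}\le A^2r^2/\big((1+\gamma)^2(1-r^2)\big)$. Substituting into the left-hand side and dividing by $A$, it remains to verify
$$\frac{r}{(1+\gamma)(1-r)}+\left(\frac{1}{1+a}+\frac{r}{1-r}\right)\frac{A\,r^2}{(1+\gamma)^2(1-r^2)}\le\frac{1}{1+a}.$$
Both terms on the left increase in $r$ on $(0,1)$, so it suffices to test $r=r_0=(1+\gamma)/(3+\gamma)$; there the first term equals $\tfrac12$ identically in $\gamma$, and after clearing denominators the inequality collapses to $(a-1)\big((1+\gamma)a+5+3\gamma\big)\le0$, which holds for $a\in[0,1)$ with equality only in the limit $a\to1$. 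This establishes the bound for $r\le r_0$ and shows it is strict unless $f$ is constant. Sharpness of $r_0$ is then immediate: the added term is nonnegative, so the failure of $M_f(r)\le1$ for $r>r_0$ guaranteed by Theorem \Ref{FR2010-thB} is inherited by the stronger inequality.
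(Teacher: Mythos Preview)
Your proof is correct and follows the same architecture as the paper's: establish the coefficient bound $|a_n|\le(1-|a_0|^2)/(1+\gamma)$ for $n\ge1$, feed it into the majorant, and verify the resulting one-variable inequality. The differences are in execution. First, the paper isolates the coefficient bound as a separate lemma (its Lemma~\ref{lem*}) and proves it by a direct appeal to Ruscheweyh's higher-order Schwarz--Pick estimate (Lemma~\Ref{R-SchPick1}); your positivity-of-coefficients argument via the automorphism $\varphi$ is a legitimate self-contained alternative, though once you have the pointwise bound $|a_n|\le A/(1+\gamma)$ your ``master estimate'' is just its geometric summation and carries no extra information. Second, for the final inequality the paper writes the upper bound as a polynomial $u(a)$ and shows $u''(a)\le0$, $u'(1)\ge0$, hence $u(a)\le u(1)=1$; your reduction by monotonicity in $r$ to the endpoint $r=r_0$ followed by the factorization $(a-1)\big((1+\gamma)a+5+3\gamma\big)\le0$ is slightly cleaner and reaches the same conclusion. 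Third, for sharpness the paper produces the explicit extremal $g_0(z)=\big(a-\gamma-(1-\gamma)z\big)/\big(1-a\gamma-a(1-\gamma)z\big)$ and computes directly that the full left-hand side exceeds $1$ for $r>r_0$ as $a\to1^-$; your shortcut---the extra term is nonnegative, so failure of the pure Bohr sum for $r>r_0$ (Theorem~\Ref{FR2010-thB}) is inherited---is valid and shorter, at the cost of giving less explicit information.
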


The next result concerns a more general case, where the function under consideration is analytic in a simply connected domain
$\Omega$, containing the unit disk $\ID$. Then, as in \cite{FourRush-2010}, we introduce
\be\label{EPR2-eq3}
\lambda = \lambda (\Omega) = \sup_{\substack{{f\in \mathcal{B}(\Omega)} \\ {n\geq 1}}}
\left \{\frac{|a_n|}{1-|a_0|^2} :\, a_0 \not\equiv f(z)=\sum_{n=0}^\infty a_nz^n, ~ z\in\mathbb{D}\right\}.
\ee

\bthm\label{EPR2-th3}
Let $\Omega\supset\mathbb{D}$ be a simply connected domain and $f\in \mathcal{B}(\Omega)$, with
$f(z)=\sum_{n=0}^\infty a_nz^n$ for $z\in \mathbb{D}$. Then, we have
\be\label{EPR2-eq4}
B_1(r):=\sum_{n=0}^\infty |a_n|r^n + 2\left( \frac{1+\lambda}{1+2\lambda}\right)^2 \, \frac{S_r}{\pi} \leq 1
~\mbox{ for }~ r\leq \frac{1}{1+2\lambda},
\ee
where $S_r$ denotes the area of the image of the disk $\mathbb{D}(0,r)$ under the mapping $f$.
\ethm

Recall that  a complex-valued function $f=u+iv$ in a simply connected domain $\Omega$ is called
harmonic in $\Omega$ if it satisfies the Laplace equation $\triangle f=4f_{z\overline{z}}=0$, i.e., $u$ and $v$ are real harmonic
in $\Omega$.
%, where $f_{z}=\frac{1}{2}(f_{x}-if_{y})$, $f_{\overline{z}}=\frac{1}{2}(f_{x}+if_{y})$.
It follows that every harmonic mapping $f$ admits a representation of the form $f=h+\overline{g}$, where $h$ and $g$ are analytic in $\Omega$.
This representation is unique up to an additive constant.  The Jacobian $J_{f}$ of $f$ is given by $J_{f}(z)=|h'(z)|^{2}-|g'(z)|^{2}$.

We say that a locally univalent function $f$ is sense-preserving if $J_{f}(z)>0$ in $\Omega$. Consequently,
a harmonic mapping $f$ is locally univalent and sense-preserving in $\Omega$ if and only if $J_{f}(z)>0$ in $\Omega$; or
equivalently if $h'\neq 0$ in $\Omega$ and the dilatation $\omega_{f}:=g'/h'$ of $f$ has the property that $|\omega_{f}|<1$ in $\Omega$ \cite{L1936}.

%In order to state the results about Bohr radius for quasi-conformal harmonic mappings, we need to introduce some notations. For harmonic mappings $f$ in $D$, we use the following standard notations:
%\begin{equation*}  \Lambda_{f}(z)=\max_{0\leq\theta\leq2\pi}|f_{z}(z)+e^{-2i\theta}f_{\overline{z}}(z)|=|f_{z}(z)|+|f_{\overline{z}}(z)|\end{equation*}
%and\begin{equation*}  \lambda_{f}(z)=\min_{0\leq\theta\leq2\pi}|f_{z}(z)+e^{-2i\theta}f_{\overline{z}}(z)|=\bigg||f_{z}(z)|-|f_{\overline{z}}(z)|\bigg|,\end{equation*}
%so that if $f$ is locally univalent and sense-preserving, then
%\begin{equation*}  J_{f}=\lambda_{f}\Lambda_{f}=|f_{z}|^{2}-|f_{\overline{z}}|^{2}>0.\end{equation*}
%A sense-preserving homeomorphism $f$ from the unit disk $D$ onto $\Omega$, contained in the Sobolev class $W_{loc}^{1,2}(D)$, is said to be a $K$-quasiregular mapping if, for $z\in D$,
%\begin{equation*}  \frac{|f_{z}|+|f_{\overline{z}}|}{|f_{z}|-|f_{\overline{z}}|}=\frac{1+|\omega_{f}(z)|}{1-|\omega_{f}(z)|}\leq K,~~ie.,~~|\omega_{f}(z)|\leq k=\frac{K-1}{K+1},\end{equation*}
%where $K\geq1$ so that $k\in[0,1)$\cite{LV1973,V1988}. %The majorant series is defined by $M_{f}(r)=\sum_{n=0}^{\infty}|a_n|r^n$.

If a locally univalent and sense-preserving harmonic mapping $f=h+\overline{g}$ on $\Omega$ satisfies the condition
$ |\omega_{f}(z)| \leq k<1$ for $\Omega$,
then $f$ is called $K$-quasiregular harmonic mapping on $\Omega$, where $K=(1+k)/(1-k)\geq 1$
(cf. \cite{Kalaj2008,Martio1968}).
Obviously $k\rightarrow 1$ corresponds to the limiting case $K\rightarrow \infty$.
% Note that when $k=1$, the condition on the dilatation of $f$ becomes
%$|\omega _f(z)|\leq 1$ in which case the Jacobian could be zero at some point. Thus, it is worth pointing out that
%our results below cover this case as well as the case where $f$ is sense-preserving.
Harmonic extensions of the classical Bohr theorem have been established in \cite{EvPoRa-19,KayPon2,KaPoSha,LPW2020,LP2019}.

In the following, we consider a harmonic mapping in $\Omega_\gamma$ and obtain Bohr's inequality for its restriction to the unit disk.

\begin{thm}\label{EPR2-th4}
Let $f=h+\overline{g}$ be a harmonic mapping in $\Omega_\gamma$, with $|h(z)|\leq 1$ on $\Omega_\gamma$.
If $h(z)=\sum_{n=0}^\infty a_nz^n$ and $g(z)=\sum_{n=1}^\infty b_nz^n$ in $\mathbb{D}$ and  $|g'(z)|\leq k|h'(z)|$ for some
$k\in [0,1]$, then
$$\sum_{n=0}^\infty |a_n|r^n + \sum_{n=1}^\infty |b_n|r^n \leq 1 \text{  for  } r\leq r_0:=\frac{1+\gamma}{3 +2k+\gamma}.
$$
The radius $r_0$ is the best possible.
\end{thm}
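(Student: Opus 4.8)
The plan is to handle the analytic part $h$ and the co-analytic part $g$ separately and then recombine. Since $|h(z)|\le1$ on $\Omega_\gamma$ we have $h\in\mathcal{B}(\Omega_\gamma)$, so the Taylor coefficients $a_n$ of $h|_{\mathbb{D}}$ obey a Fournier--Ruscheweyh type majorant estimate, while the dilatation hypothesis $|g'|\le k|h'|$ should let me dominate the majorant of $g$ by $k$ times that of $h$. Granting these two ingredients, I would recombine them as
\[
\sum_{n=0}^\infty|a_n|r^n+\sum_{n=1}^\infty|b_n|r^n\le |a_0|+(1+k)\sum_{n=1}^\infty|a_n|r^n ,
\]
and it then suffices to bound the right-hand side by $1$ for $r\le r_0$.

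For the estimate on $h$ I would pass to the unit disk through the conformal map $\phi(w)=(w-\gamma)/(1-\gamma)$ of $\mathbb{D}$ onto $\Omega_\gamma$, whose inverse is $\phi^{-1}(z)=(1-\gamma)z+\gamma$. Writing $H=h\circ\phi\in\mathcal{B}(\mathbb{D})$ we have $H(\gamma)=h(0)=a_0$, and since $h(z)=H((1-\gamma)z+\gamma)$ the Schwarz--Pick lemma gives $|a_1|=(1-\gamma)|H'(\gamma)|\le(1-|a_0|^2)/(1+\gamma)$; the same computation for $a_2$ (passing to the automorphism of $\mathbb{D}$ sending $0$ to $\gamma$) again yields $|a_2|\le(1-|a_0|^2)/(1+\gamma)$. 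Taking this coefficient bound for all $n$ --- which is exactly the estimate underlying Theorem~\Ref{FR2010-thB} --- and summing a geometric series gives
\[
\sum_{n=1}^\infty|a_n|r^n\le (1-|a_0|^2)\,\frac{r}{(1+\gamma)(1-r)},\qquad 0\le r<1 ,
\]
which reduces to the classical bound when $\gamma=0$ and, in the limit $|a_0|\to1$, reproduces the radius $\rho_\gamma=(1+\gamma)/(3+\gamma)$.

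The crucial --- and, I expect, hardest --- step is the dilatation estimate
\[
\sum_{n=1}^\infty|b_n|r^n\le k\sum_{n=1}^\infty|a_n|r^n\qquad\text{for } r\le r_0 .
\]
Writing $g'=\omega h'$ with $|\omega|\le k$ and comparing Taylor coefficients yields only $n|b_n|\le k\sum_{j=1}^{n}j|a_j|$, since coefficientwise all one has is $|\omega_m|\le k$; summing this na\"ively introduces a spurious factor $1/(1-r)$, and in fact the clean inequality above is \emph{false} for $r$ near $1$ (take $h(z)=z$ and $\omega=k\varphi_a$ with $a\to1$). Thus the estimate must be extracted only in the restricted range $r\le r_0<1$, and, since $r_0$ may exceed $1/3$ when $\gamma$ is large, one is forced to use the boundedness of $h$ on the larger domain $\Omega_\gamma$ (hence the decay of the $a_n$) to push it through; establishing this up to $r_0$ is the technical heart of the argument. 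The inequality is sharp: equality holds throughout when $\omega\equiv ke^{i\psi}$ is constant, i.e.\ $g=ke^{i\psi}(h-a_0)$ and $|b_n|=k|a_n|$.

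Combining the two ingredients gives
\[
\sum_{n=0}^\infty|a_n|r^n+\sum_{n=1}^\infty|b_n|r^n\le |a_0|+(1+k)(1-|a_0|^2)\,\frac{r}{(1+\gamma)(1-r)} ,
\]
and after cancelling the factor $1-|a_0|$ the right-hand side is at most $1$ precisely when $(1+k)(1+|a_0|)\,r\le(1+\gamma)(1-r)$; the worst case $|a_0|\to1$ gives $r\,(3+2k+\gamma)\le 1+\gamma$, that is $r\le r_0$. For the sharpness of $r_0$ I would take $h$ to be the Fournier--Ruscheweyh extremal with $|a_0|=1-\varepsilon\to1$ together with the constant dilatation $\omega\equiv k$, so that $|b_n|=k|a_n|$; a first-order expansion in $\varepsilon$ then gives
\[
\sum_{n=0}^\infty|a_n|r^n+\sum_{n=1}^\infty|b_n|r^n=1-\varepsilon\Bigl(1-\frac{2(1+k)r}{(1+\gamma)(1-r)}\Bigr)+o(\varepsilon) ,
\]
and the bracketed quantity is increasing in $r$ and vanishes exactly at $r=r_0$, so the left-hand side exceeds $1$ for every $r>r_0$. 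Hence $r_0$ cannot be improved.
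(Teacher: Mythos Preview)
Your overall outline matches the paper's: obtain $|a_n|\le(1-|a_0|^2)/(1+\gamma)$ for $n\ge1$, bound the $b_n$-sum using the dilatation hypothesis, and recombine to reach $|a_0|+(1+k)\dfrac{1-|a_0|^2}{1+\gamma}\dfrac{r}{1-r}\le1$. The coefficient estimate for $h$ and the final algebra (including the sharpness via the extremal $h_0$ with constant dilatation) are exactly what the paper does.

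The genuine gap is your ``crucial step'' $\sum_{n\ge1}|b_n|r^n\le k\sum_{n\ge1}|a_n|r^n$. You correctly note that this $\ell^1$ comparison is not automatic from $|g'|\le k|h'|$, you give a heuristic reason why it should fail for large $r$, and then you leave it unproved for $r\le r_0$, calling it ``the technical heart of the argument.'' But the paper never proves---and never needs---this inequality. Instead it uses the $\ell^2$ consequence of the dilatation bound (from \cite{KaPoSha}),
\[
\sum_{n\ge1}|b_n|^2 r^n\le k^2\sum_{n\ge1}|a_n|^2 r^n,
\]
and then Cauchy--Schwarz together with $|a_n|\le(1-|a_0|^2)/(1+\gamma)$:
\[
\sum_{n\ge1}|b_n|r^n\le\Bigl(\sum_{n\ge1}|b_n|^2 r^n\Bigr)^{1/2}\Bigl(\sum_{n\ge1}r^n\Bigr)^{1/2}
\le k\,\frac{1-|a_0|^2}{1+\gamma}\,\frac{r}{1-r}.
\]
This lands directly on the \emph{same} majorant that bounds $\sum|a_n|r^n$, so the recombination is immediate and valid for all $r<1$, with no delicate range restriction to justify. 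In short, you aimed at a stronger intermediate inequality than is needed; replacing it with the Cauchy--Schwarz/$\ell^2$ route closes the argument cleanly.
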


\begin{cor}
Let $f=h+\overline{g}$ be a harmonic mapping in $\Omega_\gamma$, with $|h(z)|\leq 1$ on $\Omega_\gamma$.
If $h(z)=\sum_{n=0}^\infty a_nz^n$,  $g(z)=\sum_{n=1}^\infty b_nz^n$ in $\mathbb{D}$ and $f=h+\overline{g}$
is sense-preserving in $\ID$, then
$$\sum_{n=0}^\infty |a_n|r^n + \sum_{n=1}^\infty |b_n|r^n \leq 1 \text{  for  } r\leq r_0:=\frac{1+\gamma}{5+\gamma}.
$$
The radius $r_0$ is the best possible.
\end{cor}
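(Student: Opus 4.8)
The plan is to obtain this corollary as the endpoint case $k=1$ of Theorem~\ref{EPR2-th4}, so that essentially no new computation is required; the only genuine issue is reconciling the strictness built into the sense-preserving hypothesis with the endpoint value of $k$.

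First I would translate the sense-preserving assumption into the dilatation bound demanded by Theorem~\ref{EPR2-th4}. Since $f=h+\overline g$ is sense-preserving in $\ID$, its Jacobian satisfies $J_f(z)=|h'(z)|^2-|g'(z)|^2>0$ for all $z\in\ID$, whence $|g'(z)|<|h'(z)|$ and in particular $|g'(z)|\le|h'(z)|$ on $\ID$. This is precisely the hypothesis $|g'(z)|\le k\,|h'(z)|$ of Theorem~\ref{EPR2-th4} with $k=1$. Invoking that theorem with $k=1$ yields
$$\sum_{n=0}^\infty |a_n|r^n + \sum_{n=1}^\infty |b_n|r^n \le 1 \quad\text{for}\quad r\le \frac{1+\gamma}{3+2\cdot 1+\gamma}=\frac{1+\gamma}{5+\gamma},$$
which is the stated inequality, so the existence half of the corollary is immediate.

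For sharpness I would run a limiting argument through values $k<1$, for which the competitors are automatically sense-preserving. Fix any $r_1>\frac{1+\gamma}{5+\gamma}$. The quantity $\frac{1+\gamma}{3+2k+\gamma}$ is continuous and strictly decreasing in $k$ with limit $\frac{1+\gamma}{5+\gamma}$ as $k\to 1^-$, so I can pick $k\in(0,1)$ close enough to $1$ that $r_1>\frac{1+\gamma}{3+2k+\gamma}$. By the sharpness assertion of Theorem~\ref{EPR2-th4} for this $k$, there is a harmonic mapping $f=h+\overline g$ in $\Omega_\gamma$ with $|h|\le 1$ and $|g'|\le k|h'|$ whose associated sum exceeds $1$ at $r=r_1$. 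Because $k<1$, this $f$ obeys $|g'(z)|\le k|h'(z)|<|h'(z)|$, hence $J_f>0$ and $f$ is sense-preserving in $\ID$; it is therefore an admissible competitor for the corollary that violates the inequality for some $r>\frac{1+\gamma}{5+\gamma}$. Hence $r_0=\frac{1+\gamma}{5+\gamma}$ cannot be enlarged.

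The main (if mild) obstacle is exactly this endpoint subtlety: sense-preservation forces the strict inequality $|g'|<|h'|$, which is incompatible with the equality $|g'|=|h'|$ that the extremal of Theorem~\ref{EPR2-th4} would require at $k=1$. Consequently the extremal mapping at $k=1$ is only a limiting object and not itself sense-preserving, so I would not try to exhibit a single $k=1$ extremizer. Instead the sharpness is delivered by the family of genuinely sense-preserving extremizers with $k\uparrow 1$, whose radii $\frac{1+\gamma}{3+2k+\gamma}$ decrease to $\frac{1+\gamma}{5+\gamma}$, precisely as in the approximation argument above.
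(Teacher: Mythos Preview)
Your approach is exactly what the paper intends: the corollary is stated immediately after Theorem~\ref{EPR2-th4} with no separate proof, being the case $k=1$.

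One small logical slip in your sharpness argument deserves a fix. From $|g'|\le k|h'|$ with $k<1$ you conclude $J_f>0$, but this implication fails at any zero of $h'$: there $g'$ must vanish as well and $J_f=0$, so the map is not sense-preserving. The abstract sharpness assertion of Theorem~\ref{EPR2-th4} does not by itself rule out such zeros. The remedy is immediate once you look at the explicit extremal in the proof of Theorem~\ref{EPR2-th4}: its analytic part $h_0$ is a M\"obius transformation of $\Omega_\gamma$ onto $\ID$, hence $h_0'\ne 0$ throughout, and with $k\lambda<1$ the resulting $f_0=h_0+\overline{g_0}$ is genuinely sense-preserving in $\ID$. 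With that adjustment your limiting argument through $k\uparrow 1$ (equivalently $\lambda\uparrow 1$ at $k=1$) goes through and matches the paper's construction.
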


\section{Proofs of the main results}

\subsection{Necessary Lemma}
Pick's conformally invariant form of Schwarz's lemma states that if  $f\in {\mathcal B}(\ID)$, then
\begin{equation}\label{AKP-SchPick}
|f(z)|\le \frac{r+|f(0)|}{1+|f(0)| r} ~\mbox{ and }~
|f'(z)|\le \frac{1-|f(z)|^2}{1-|z|^2},\quad z\in\ID.
\end{equation}
%%so that for $z^m$, we have
%$$|f(z^m)|\le \frac{r^m+a}{1+ar^m}, \ \ \ \ \ |f'(z^m)|\le\frac{1-|f(z^m)|^2}{1-|z^m|^2},
%$$
%where $|a_0|=f(0)\in [0,1)$.
Furthermore, it is well-known that the Taylor coefficients of  $f\in {\mathcal B}(\ID)$ satisfy the inequalities:
\begin{equation}\label{AKP-SchPick2}
\frac{|f^{(n)}(0)|}{n!}\le 1-|f(0)|^2 ~\mbox{ for any $n\ge 1$.}
\end{equation}
More generally, we have the following sharp estimate for higher order derivatives.

\begin{Lem}
{\rm (Ruscheweyh \cite{Ru})}
\label{R-SchPick1}
For  $f\in {\mathcal B}(\ID)$, we have
$$\frac{|f^{(n)}(\alpha)|}{n!}\le \frac{1-|f(\alpha)|^2}{(1-|\alpha|)^{n-1}(1-|\alpha|^2)}
$$
for each $n\ge 1$ and $\alpha\in\ID$.
Moreover, for each fixed $n\ge 1$ and $\alpha\in \ID$,
$$\sup_{f} (1-|\alpha|)^{n-1}\frac{|f^{(n)}(\alpha)|}{n!} \frac{1-|\alpha|^2}{1-|f(\alpha)|^2} =1,
$$
where the supremum is taken over all nonconstant analytic functions $f\in {\mathcal B}(\ID)$.
\end{Lem}

\begin{lem}\label{EPR2-lem1}
Let $g:\,\mathbb{D}\to \mathbb{\overline{D}}$ be an analytic function, and let $\gamma \in \mathbb{D}$ be such that
$g(z)=\sum_{n=0}^\infty \alpha_n (z-\gamma )^n$ for $|z-\gamma | <1-|\gamma |$. Then
$$\sum_{n=0}^\infty  |\alpha_n|r^n +\frac{8}{9}\left(\frac{S_{r}^\gamma}{\pi} \right) \leq 1  ~\mbox{  for  }~ r\leq r_0:=\frac{1-|\gamma |^2}{3+|\gamma |} ,
$$
where $S_{r}^\gamma$ denotes the area of the image of the disk $\ID(\gamma; r(1-|\gamma |))$ under the mapping $g$.
\end{lem}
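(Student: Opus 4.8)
The plan is to estimate the majorant sum and the area term separately in terms of the two quantities $a:=|g(\gamma)|=|\alpha_0|$ and $t:=|\gamma|$, using Ruscheweyh's sharp derivative bound, and then to reduce the whole statement to a single elementary inequality in $a\in[0,1]$ and $t\in[0,1)$. Throughout write $\rho=r(1-t)$ for the radius of the disk $\ID(\gamma;\rho)$ whose image has area $S_r^\gamma$.

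Since $\alpha_n=g^{(n)}(\gamma)/n!$, Ruscheweyh's bound (Lemma~\Ref{R-SchPick1}) applied at the point $\gamma$ gives, for every $n\ge1$,
$$|\alpha_n|\le\frac{1-a^2}{(1-t)^{n-1}(1-t^2)}.$$
Because $r\le r_0<1-t$, summing the resulting geometric series is legitimate and yields
$$\sum_{n=0}^\infty|\alpha_n|r^n\le a+\frac{(1-a^2)\,r}{(1+t)(1-t-r)}.$$

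For the area I would use $S_r^\gamma\le\int_{\ID(\gamma;\rho)}|g'|^2\,dA$ (with equality when $g$ is univalent) together with term-by-term integration over circles, giving $\frac{S_r^\gamma}{\pi}\le\sum_{n=1}^\infty n|\alpha_n|^2\rho^{2n}$. Inserting the same bound for $|\alpha_n|^2$, the powers of $(1-t)$ cancel and the summation $\sum_{n\ge1}nx^n=x/(1-x)^2$ collapses everything to
$$\frac{S_r^\gamma}{\pi}\le\frac{(1-a^2)^2}{(1+t)^2}\cdot\frac{r^2}{(1-r^2)^2}.$$

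Adding the two estimates bounds the left-hand side by a function $\Phi(r)$ whose non-constant summands are each increasing in $r$, so it suffices to check $\Phi(r_0)\le1$. The computation that makes this clean is that $r_0/[(1+t)(1-t-r_0)]=\frac{1}{2}$ exactly, so the majorant part at $r_0$ equals $a+\frac{1}{2}(1-a^2)=1-\frac{1}{2}(1-a)^2$; the claim thus reduces to
$$\frac{8}{9}\cdot\frac{(1-a^2)^2}{(1+t)^2}\cdot\frac{r_0^2}{(1-r_0^2)^2}\le\frac{(1-a)^2}{2}.$$
Cancelling $(1-a)^2$, bounding $(1+a)^2\le4$, and using the factorization $(3+t)^2-(1-t^2)^2=(2+t+t^2)(4+t-t^2)$ turns this into the polynomial inequality $8(1-t)(3+t)\le3(2+t+t^2)(4+t-t^2)$, that is, $0\le t(34+17t-3t^3)$, which is clear on $[0,1)$. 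I expect the main obstacle to be exactly this final reduction: one must check that the crude termwise squaring of Ruscheweyh's bound for $S_r^\gamma$, which discards the fact that equality there can occur for only one index $n$, is nevertheless strong enough, and that the resulting rational inequality in $t$ (an equality only in the limit $t=0$, $a\to1$, which is what pins down the constant $8/9$) holds across the whole range.
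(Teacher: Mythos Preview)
Your proof is correct and follows essentially the same route as the paper: Ruscheweyh's derivative estimate for the majorant sum, an area bound, monotonicity in $r$ to reduce to $r=r_0$, and then a one-variable verification. The only noteworthy differences are cosmetic: you bound the area directly via $\sum n|\alpha_n|^2\rho^{2n}$ with Ruscheweyh squared (gaining an extra factor $1/(1+t)^2$ over the paper's use of the Kayumov--Ponnusamy area bound for $G=g\circ\phi^{-1}$), and you finish by bounding $(1+a)^2\le4$ and checking the polynomial inequality in $t$, whereas the paper instead shows that the resulting function of $a$ is increasing on $[0,1]$.
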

\begin{proof}
Without loss of generality, we may assume that $\gamma \in [0,1)$. Also, we note that $z\in D_\gamma:=\ID (\gamma; 1-\gamma )$
if and only if $w=(z-\gamma)/(1-\gamma)$ lies inside $\mathbb{D}$.

For $z\in D_\gamma $, define $\phi :\,D_{\gamma}\to \mathbb{D}$ by $w=\phi (z)=\frac{z-\gamma}{1-\gamma}$.
Then we have
\begin{eqnarray*}
g(z)
%&=&  \sum_{n=0}^\infty \alpha_n (1-\gamma)^n \left(\frac{z-\gamma}{1-\gamma}\right)^n \\
&=&\sum_{n=0}^\infty \alpha_n (1-\gamma)^n \phi (z)^n =\sum_{n=0}^\infty b_n \phi (z)^n =:G(\phi (z)),
\end{eqnarray*}
in $z\in D_{\gamma}$, where $G(w)$ is an analytic function in $\mathbb{D}$ with
$$G(w)=\sum_{n=0}^\infty b_nw^n ~\mbox{ for $w\in\mathbb{D}$,}
$$
so that $g=G\circ \phi $ in $D_{\gamma}$.\\
As in \cite{KaPo-18}, for the area term of the function $G$, we have the upper bound
\be\label{EPR2-eq1}
\frac{S_{r}^\gamma }{\pi} = \frac{1}{\pi}{\rm Area}\left[G \big( \ID(0;r)\big) \right] \leq (1-|b_0|^2)^2 \frac{r^2}{(1-r^2)^2}=(1-|\alpha_0|^2)^2 \frac{r^2}{(1-r^2)^2}.
\ee
Moreover,
\begin{eqnarray*}
{\rm Area}\left[ G\big( \ID(0;r)\big)\right] &=& {\rm Area} \left[g\big(\phi ^{-1}( \ID(0;r))\big)\right]
= {\rm Area} \left[g\big( \ID(\gamma;r(1-\gamma))\big)\right].
\end{eqnarray*}
By Lemma \Ref{R-SchPick1}, it follows that
$$ |\alpha_n|=\frac{|g^{(n)}(\gamma)|}{n!}\leq (1-|\alpha_0|^2)\frac{(1+\gamma)^{n-1}}{(1-\gamma^2)^n}= \frac{1-|\alpha_0|^2}{(1+\gamma)(1-\gamma)^n}
$$
for the Taylor coefficients of $g$, and therefore, we have the inequality
\be\label{EPR2-eq2}
\sum_{n=1}^\infty |\alpha_n|r^n \leq \frac{1-|\alpha_0|^2}{1+\gamma}\sum_{n=1}^\infty \frac{r^n}{(1-\gamma)^n}
=\frac{1-|\alpha_0|^2}{1+\gamma}\left ( \frac{r}{1-\gamma -r}\right )
.
\ee
Thus, by \eqref{EPR2-eq1} and \eqref{EPR2-eq2}, we find that
$$\sum_{n=0}^\infty |\alpha_n|r^n +K\left(\frac{S_r^{\gamma}}{\pi}\right) \leq 1+ \Psi (r)
%|\alpha_0|+\frac{(1-|\alpha_0|^2)r}{(1+\gamma)(1-\gamma -r)}+K\frac{(1-|\alpha_0|^2)^2r^2}{(1-r^2)^2},
$$
which is less than or equal to $1$ provided that $\Psi (r)\leq 0$, where
$$\Psi (r)=|\alpha_0|+\frac{(1-|\alpha_0|^2)r}{(1+\gamma)(1-\gamma -r)}+K\frac{(1-|\alpha_0|^2)^2r^2}{(1-r^2)^2} -1,
$$
and $|\alpha_0|\leq 1$, which is always true by the assumption. Now, it is a simple exercise to see that $\Psi$ is an increasing function of $r$
for all $r<1-\gamma$, which in turn implies that
$$\Psi (r)\leq \Psi (r_0) \mbox{  for  }~ r\leq r_0:=\frac{1-\gamma ^2}{3+\gamma } .
$$
Thus, to prove that $\Psi (r)\leq 0$, it suffices to show that $\Psi (r_0)\leq 0$ for all $|\alpha_0|\leq 1$. Elementary computation gives that
\beqq
\Psi (r_0) &= & \frac{1-|\alpha_0|^2}{2}\left [1 + 2K(1-|\alpha_0|^2)\frac{(3+\gamma
)^2(1-\gamma ^2)^2}{[(3+\gamma)^2-(1-\gamma ^2)^2]^2}-\frac{2}{1+|\alpha_0|}	 \right ]\\
&=&\frac{1-|\alpha_0|^2}{2}F(|\alpha_0|),
\eeqq
where
$$F(x)= 1+ 2KA^2(\gamma)(1-x^2)-\frac{2}{1+x}, \quad x\in [0,1],
$$
and
$$A(\gamma)=\frac{(3+\gamma )(1-\gamma ^2)}{(3+\gamma)^2-(1-\gamma ^2)^2}.
%=(M\circ N)(\gamma ), \quad M(r)=\frac{r}{1-r^2}, ~ N(\gamma )=\frac{1-\gamma ^2}{3+\gamma }.
$$

It remains to show that $F(x) \leq 0$ for all $x\in [0,1]$ and $\gamma \in [0,1)$.
To do this, we first observe that $A(\gamma)>0$ for $\gamma \in [0,1)$,
$$ F(0)=2KA^2(\gamma)-1 ~\mbox{ and }~ \lim_{x\to 1^-}F(x)=0.
$$
Also, we may write
$$A(\gamma) =(M\circ N)(\gamma ), \quad M(r)=\frac{r}{1-r^2}, ~\mbox{ and }~ N(\gamma )=\frac{1-\gamma ^2}{3+\gamma }.
$$
It follows that $A'(\gamma) =M'(N(\gamma )) N'(\gamma )$, where
$$N'(\gamma ) = -\left (\frac{\gamma ^2+6 \gamma +1}{(3+\gamma )^2}\right ),
$$
showing that $M(r)$ is an increasing function of $r$ in $(0,1)$ and $N$ is a decreasing function of $\gamma$ in $[0,1)$. Therefore, it
follows that $A$ is a decreasing function of $\gamma$ in $[0,1)$, with $A(0)=3/8$ and $A(1)=0$,
and thus, $A^2(\gamma)$ is decreasing on $[0,1]$. Hence, we have
$$A^2(\gamma)\leq A^2(0)=\frac{9}{64}.
$$
Finally, we obtain that
\beqq
F'(x)&=&-4KA^2(\gamma)x+\frac{2}{(1+x)^2}\\
&=&\frac{2}{(1+x)^2}[1-2KA^2(\gamma)x(1+x)^2]\\
&\geq& \frac{2}{(1+x)^2}\left (1-K\frac{9}{8}\right ),
\eeqq
which  is positive for all $x\in (0,1)$
%if and only if
%$$K\leq\frac{1}{2A^2(\gamma)x(1+x)^2}.
%$$
%Because $2A^2(\gamma)x(1+x)^2 \leq 8(9/64)=9/8$ for $x\in (0,1]$ and $\gamma \in [0,1)$, the last inequality holds
whenever $K\leq 8/9$.
Thus, $F$ is increasing on $[0,1]$ for $K\leq 8/9$. In particular, $F(x) \leq 0$ for all $x\in [0,1]$ and $\gamma \in [0,1)$.
This completes the proof of the lemma.
\end{proof}

\subsection{Proof of Theorem \ref{EPR2-th1}}
For $0\leq \gamma<1$, let
\[
\Omega_{\gamma}=\left \{z\in \mathbb{C}:\, \left |z+\frac{\gamma}{1-\gamma}\right |<\frac{1}{1-\gamma}\right \}
\]
and consider $f:\, \Omega_{\gamma}\to \mathbb{\overline{D}}$ as in the statement.
We consider the analytic function $\phi :\,\mathbb{D}\to \Omega_{\gamma}$ defined by  $\phi(z)=(z-\gamma)/(1-\gamma)$. Then the
composition $g=f\circ \phi$ is analytic in $\mathbb{D}$ and
$$g(z)=(f\circ \phi)(z)= \sum_{n=0}^\infty
 \frac{a_n}{(1-\gamma)^n} \, (z-\gamma)^n ~\mbox{ for $|z-\gamma|<1-\gamma$}.
$$
Applying Lemma \ref{EPR2-lem1} to the function $g$ gives
$$\sum_{n=0}^\infty \left| \frac{a_n}{(1-\gamma)^n}\right | \rho ^n + \frac{8}{9}\left( \frac{S_\rho^{\gamma}}{\pi}\right)
\leq 1 ~\mbox{ for }~ \rho \leq \frac{1-\gamma ^2}{3+\gamma},
$$
where $S_\rho^{\gamma}$ is defined as in Lemma \ref{EPR2-lem1} for $g$; or equivalently,
$$ \sum_{n=0}^\infty | a_n| \left( \frac{\rho}{1-\gamma}\right)^n + \frac{8}{9}\left( \frac{S_\rho^{\gamma}}{\pi}\right)
\leq 1~\mbox{ for }~ \frac{\rho}{1-\gamma}\leq \frac{1+\gamma }{3+\gamma}.
$$
The desired inequality follows by setting $\rho =r(1-\gamma)$.

In order to prove the sharpness of the result, we consider the composition of the functions $G:\,\Omega_\gamma \to \mathbb{D}$ with $G(z)=(1-\gamma)z+\gamma$ and $\psi :\mathbb{D} \to \mathbb{D}$ with
\[
\psi(z)=\frac{a-z}{1-az},
\]
for $a\in (0,1)$.
Then $g_0= \psi \circ G$ maps $\Omega_\gamma$ univalently onto $\mathbb{D}$. This gives
$$g_0(z)=\frac{a-\gamma-(1-\gamma)z}{1-a\gamma-a(1-\gamma)z}=A_0-\sum_{n=1}^\infty A_n z^{n}, ~z\in\ID,
$$
where $a\in (0,1)$,
\be\label{EPR2-eq6}
A_0=\frac{a-\gamma}{1-a\gamma} ~\mbox{ and }~A_n=\frac{1-a^2}{a(1-a\gamma)}\left( \frac{a(1-\gamma)}{1-a\gamma}\right)^n
\ee

%, which has the Taylor series
%$$g_0(z)=\frac{a-\gamma}{1-a\gamma}-\frac{1-a^2}{a(1-a\gamma)} \sum_{n=1}^\infty \left( \frac{a(1-\gamma)}{1-a\gamma}\right)^nz^n.$$
%Then,
%%$$\frac{S_{r(1-\gamma)}}{\pi}=\sum_{n=1}^\infty n |a_n|^2r^{2n} = \left( \frac{1-a^2}{a(1-a\gamma)}\right)^2 \sum_{n=1}^\infty n \left( \frac{ar(1-\gamma)}{1-a\gamma}\right)^{2n},$$
%\textcolor{red}{
%\begin{eqnarray*}
%\frac{S_{\rho}}{\pi}=\sum_{n=1}^\infty n |a_n|^2\rho ^{2n} = \left( \frac{1-a^2}{a(1-a\gamma)}\right)^2 \sum_{n=1}^\infty n \left( \frac{a\rho(1-\gamma)}{1-a\gamma}\right)^{2n}.
%\end{eqnarray*}
%%Thus $$\frac{S_{r(1-\gamma)}}{\pi}=\left( \frac{1-a^2}{a(1-a\gamma)}\right)^2 \sum_{n=1}^\infty n \left( \frac{ar(1-\gamma)^2}{1-a\gamma}\right)^{2n},$$
For a given $\gamma \in [0,1)$, let $a>\gamma$ and find that
\begin{eqnarray*}
\sum_{n=0}^\infty|a_n|r^n +\frac{8}{9}\left( \frac{S_{r(1-\gamma)}}{\pi}\right) &=&
A_0+\sum_{n=1}^\infty A_n r^{n} + \frac{8}{9} \sum_{n=1}^\infty nA_n^2 r^{2n}\\
%\frac{a-\gamma}{1-a\gamma}+\frac{1-a^2}{a(1-a\gamma)}\sum_{n=1}^\infty \left( \frac{ar(1-\gamma)}{1-a\gamma}\right)^n \\
%& &+\frac{8}{9}\left( \frac{1-a^2}{a(1-a\gamma)}\right)^2 \sum_{n=1}^\infty n \left( \frac{ar(1-\gamma)^2}{1-a\gamma}\right)^{2n}\\
&=& \frac{a-\gamma}{1-a\gamma}+\frac{1-a^2}{1-a\gamma}\frac{(1-\gamma)r}{1-a\gamma-ar(1-\gamma)}\\
& &+ \frac{8}{9}\frac{r^2(1-a^2)^2(1-\gamma)^4}{[(1-a\gamma)^2-a^2r^2(1-\gamma)^4]^2}\\
&=&1-(1-a)\Phi (r),
%&=&1-(1-a)\Bigg[ \frac{1+\gamma}{1-a\gamma}-\frac{1+a}{1-a\gamma}\frac{r(1-\gamma)}{1-a\gamma -ar(1-\gamma)}\\
%& & -\frac{8}{9}\frac{(1-a)(1+a)^2(1-\gamma)^4r^2}{[(1-a\gamma)^2-a^2r^2(1-\gamma)^4]^2} \Bigg].
\end{eqnarray*}
where
$$\Phi (r)=\frac{1+\gamma}{1-a\gamma}-\frac{1+a}{1-a\gamma}\frac{r(1-\gamma)}{1-a\gamma -ar(1-\gamma)}
-\frac{8}{9}\frac{(1-a)(1+a)^2(1-\gamma)^4r^2}{[(1-a\gamma)^2-a^2r^2(1-\gamma)^4]^2}.
$$
%and hence, for $a>\gamma$
%\begin{eqnarray*}
%\sum_{n=0}^\infty|a_n|r^n +\frac{8}{9}\left( \frac{S_{r(1-\gamma)}}{\pi}\right) &=& \frac{a-\gamma}{1-a\gamma}+\frac{1-a^2}{a(1-a\gamma)}\sum_{n=1}^\infty \left( \frac{ar(1-\gamma)}{1-a\gamma}\right)^n \\
%& &+\frac{8}{9}\left( \frac{1-a^2}{a(1-a\gamma)}\right)^2 \sum_{n=1}^\infty n \left( \frac{ar(1-\gamma)}{1-a\gamma}\right)^{2n} \\
%&=& \frac{a-\gamma}{1-a\gamma}+\frac{1-a^2}{1-a\gamma}\frac{(1-\gamma)r}{1-a\gamma-ar(1-\gamma)}\\
%& &+ \frac{8}{9}\frac{r^2(1-a^2)^2(1-\gamma)^4}{[(1-a\gamma)^2-a^2r^2(1-\gamma)^4]^2}\\
%&=&1-(1-a)\Bigg[ \frac{1+\gamma}{1-a\gamma}-\frac{1+a}{1-a\gamma}\frac{r(1-\gamma)}{1-a\gamma -ar(1-\gamma)}\\
%& & -\frac{8}{9}\frac{(1-a)(1+a)^2(1-\gamma)^4r^2}{[(1-a\gamma)^2-a^2r^2(1-\gamma)^4]^2} \Bigg].
%\end{eqnarray*}
%Denote by $\Phi (r)$ the quantity in the square brackets.
Then, $\Phi$ is a strictly decreasing function of $r$ in $(0,1)$ and thus, for $r>r_0=(1+\gamma)(3+\gamma)$, we have
\begin{eqnarray*}
\Phi (r)<\Phi(r_0) &=& \frac{1+\gamma}{1-a\gamma}-\frac{1+a}{1-a\gamma}\frac{1-\gamma ^2}{(1-a\gamma)(3+\gamma)-a(1-\gamma ^2)}\\
& & -\frac{8}{9}(1-a)\frac{(1+a)^2(1-\gamma)^4(1+\gamma)^2(3+\gamma)^2}{[(1-a\gamma)^2(3+\gamma)^2-a^2(1-\gamma)^4(1+\gamma)^2]^2},
\end{eqnarray*}
which tends to 0 as $a\to 1$. Therefore, $\Phi $ is negative for $r>r_0$ and hence, $1-(1-a)\Phi(r)>1$.
\hfill $\Box$

\vspace {6pt}

For the proof of Theorem \ref{EPR2-th2}, we modify the previous arguments slightly and prove the following:

\begin{lem} \label{lem*}
For $\gamma\in [0,1) $, let
$$\Omega_{\gamma}:=\left \{z\in \mathbb{C}:| z+ \frac{\gamma}{1-\gamma}|<\frac{1}{1-\gamma}\right \},
$$
and let $f$ be an analytic function in $\Omega_\gamma$, bounded by $1$, with the series representation
$f(z)=\sum_{n=0}^\infty a_n z^n$ in the unit disk $\ID$. Then,
$$|a_n|\leq \frac{1-|a_0|^2}{1+\gamma} ~\mbox{ for $n\geq 1$.}
$$
\end{lem}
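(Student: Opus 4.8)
**Proof plan for Lemma \ref{lem*}.**

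The claim is that if $f$ is analytic in $\Omega_\gamma$ with $|f|\leq 1$ there, and $f(z)=\sum_{n=0}^\infty a_nz^n$ in $\ID$, then $|a_n|\leq (1-|a_0|^2)/(1+\gamma)$ for every $n\geq 1$. The Taylor coefficients $a_n$ are those of the expansion of $f$ about the origin, and the origin is an interior point of $\Omega_\gamma$ (since $\ID\subset\Omega_\gamma$), so $a_n = f^{(n)}(0)/n!$. The idea is to transfer the problem to the unit disk and apply Ruscheweyh's sharp derivative bound (Lemma \ref{R-SchPick1}), exactly as in the proof of Lemma \ref{EPR2-lem1}, but now evaluating the derivative at a single well-chosen point rather than summing a series.

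The plan is as follows. First I would introduce the conformal map $\phi:\ID\to\Omega_\gamma$, $\phi(w)=(w-\gamma)/(1-\gamma)$, so that $g:=f\circ\phi$ is analytic on $\ID$ with $|g|\leq 1$, i.e.\ $g\in\mathcal B(\ID)$. Note $\phi(\gamma)=0$, so $g(\gamma)=f(0)=a_0$. Differentiating the composition $f(z)=g(\phi^{-1}(z))$ at $z=0$, or equivalently reading off coefficients from $g(w)=\sum_{n=0}^\infty a_n\,\phi(w)^n=\sum_{n=0}^\infty a_n (w-\gamma)^n/(1-\gamma)^n$, the $n$-th Taylor coefficient of $g$ about the point $\gamma$ is $a_n/(1-\gamma)^n$; equivalently $g^{(n)}(\gamma)/n! = a_n/(1-\gamma)^n$. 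The second step is to apply Lemma \ref{R-SchPick1} to $g\in\mathcal B(\ID)$ at the point $\alpha=\gamma\in[0,1)$, which yields
\[
\frac{|g^{(n)}(\gamma)|}{n!}\leq \frac{1-|g(\gamma)|^2}{(1-\gamma)^{n-1}(1-\gamma^2)}
=\frac{1-|a_0|^2}{(1-\gamma)^{n-1}(1-\gamma^2)}.
\]
Combining the two displays gives
\[
|a_n| = (1-\gamma)^n\,\frac{|g^{(n)}(\gamma)|}{n!}
\leq (1-\gamma)^n\,\frac{1-|a_0|^2}{(1-\gamma)^{n-1}(1-\gamma^2)}
=\frac{(1-\gamma)(1-|a_0|^2)}{1-\gamma^2}
=\frac{1-|a_0|^2}{1+\gamma},
\]
which is precisely the asserted bound.

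The only genuine point requiring care is the reduction to $\gamma\in[0,1)$ and the bookkeeping of the coefficient transfer: since the statement of the lemma already restricts to $\gamma\in[0,1)$ and $\Omega_\gamma$ is defined for that range, no further normalization is needed, and the identification $g^{(n)}(\gamma)/n!=a_n/(1-\gamma)^n$ is immediate from the power-series substitution $w\mapsto (w-\gamma)/(1-\gamma)$. I do not expect a substantive obstacle here; the result is essentially the per-coefficient core extracted from the summed estimate \eqref{EPR2-eq2} in the proof of Lemma \ref{EPR2-lem1}, and the main work is simply verifying that the factors of $(1-\gamma)$ and $(1-\gamma^2)=(1-\gamma)(1+\gamma)$ collapse to give the clean bound $(1-|a_0|^2)/(1+\gamma)$.
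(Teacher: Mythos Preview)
Your proposal is correct and follows essentially the same route as the paper: compose $f$ with the affine map $\phi(w)=(w-\gamma)/(1-\gamma)$ to obtain $g=f\circ\phi\in\mathcal B(\ID)$, identify $g^{(n)}(\gamma)/n!=a_n/(1-\gamma)^n$, and then apply Ruscheweyh's derivative estimate (Lemma~\Ref{R-SchPick1}) at $\alpha=\gamma$ to collapse the factors and obtain $|a_n|\leq(1-|a_0|^2)/(1+\gamma)$. Your bookkeeping is in fact cleaner than the printed proof, which contains a couple of harmless typos in the intermediate displays.
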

\begin{proof}
%Let $ \Delta_\gamma =\ID\cap \ID(\gamma;1-\gamma)$. Then for $z\in \Delta_\gamma$, we define the function $g(z)$ by
Clearly, $\phi(z)=(z-\gamma)/(1-\gamma)\in \Omega_{\gamma}$ if and only if $z\in\ID$. Then $g:\,\ID\ra \ID$ defined by
$g(z)=(f\circ \phi)(z)$,
%$$g(z)=f\left(\frac{z-\gamma}{1-\gamma}\right),
%$$
is analytic in  $\ID$, $g(\gamma)=f(0)=a_0$ and
$$\frac{g^{(n)}(z)}{n!}=f^{(n)}\left(\frac{z-\gamma}{1-\gamma}\right) \frac{1}{(1-\gamma)^n}.
$$
In particular, at $z=\gamma$, this gives
$$(1-\gamma)^n\frac{g^{(n)}(\gamma)}{(n!)^2}=\frac{f^{(n)}(0)}{n!}=a_n   ~\mbox{ for $n\geq 1$.}~
$$
%
% In $\Delta_\gamma$, $g$ has the Taylor series
%$$g(z)= \sum_{n=0}^\infty a_n \left(\frac{z-\gamma}{1-\gamma}\right)^n = \sum_{n=0}^\infty \frac{a_n}{(1-\gamma)^n}(z-\gamma)^n=\sum_{n=0}^\infty b_n(z-\gamma)^n$$
%for $b_n=a_n/(1=\gamma)^n$, $n=0,1,\ldots$.
By Lemma \Ref{R-SchPick1} we deduce that for $n\geq 1$,
$$|a_n| =(1-\gamma)^n\frac{|g^{(n)}(\gamma)|}{n!} \leq \frac{1-|g(\gamma)|^2}{(1+\gamma)}=\frac{1-|a_0|^2}{1+\gamma},
%~\mbox{ i.e. ,}~ |a_n| \leq \frac{1-|a_0|^2}{1+\gamma}
$$
and this completes the proof.
\end{proof}

\subsection{Proof of Theorem \ref{EPR2-th2}}
Without loss of generality, we may assume that $a_0:=a \in (0,1)$. By applying Lemma \ref{lem*} we obtain
\begin{eqnarray*}
M_f(r)&=&\sum_{n=0}^\infty |a_n|r^n +\left( \frac{1}{1+|a_0|}+ \frac{r}{1-r}\right)\|f_0\|_r  \\
&\leq & a +  \frac{1-a^2}{1+\gamma} \sum_{n=1}^\infty r^n
 + \left(\frac{1}{1+a}+ \frac{r}{1-r}\right) \left( \frac{1-a^2}{1+\gamma}\right)^2 \sum_{n=1}^\infty   r^{2n} \\
 &=&a + \frac{1-a^2}{1+\gamma} \frac{r}{1-r} + \left(\frac{1}{1+a}+ \frac{r}{1-r}\right) \left( \frac{1-a^2}{1+\gamma}\right)^2 \frac{r^2}{1-r^2} =u(a) ~\mbox{ (say)}.\\
\end{eqnarray*}
Here $u(a)= a +A(1-a^2) +B(1-a)(1-a^2) +C(1-a^2)^2$ for $a\in [0,1]$,  where
$$A =\frac{1}{1+\gamma}\frac{r}{1-r}, ~B=\frac{1}{(1+\gamma)^2}\frac{r^2}{1-r^2}~\mbox{ and }~C=\frac{1}{(1+\gamma)^2}\frac{r^3}{(1-r)(1-r^2)}.
%(1-x^2) +B(1-x)(1-x^2) +C(1-x^2)^2,~x\in [0,1).
$$
Now,
%$$u(x):=x +\frac{1-x^2}{1+\gamma} \frac{r}{1-r}+\frac{(1-x)(1-x^2)}{(1+\gamma )^2}\frac{r^2}{1-r^2}+\frac{(1-x^2)^2}{(1+\gamma
%)^2} \frac{r^3}{(1-r)(1-r^2)},~x\in [0,1).
%$$
%Then we have
$$u'(a)=1-2Aa+ B(3a^2-2a-1) +4C(a^3-a)
$$
and
$$u''(a)= -2A+2B(3a-1)+4C(3a^2-1).
$$
Because $B$ and $C$ are non-negative, $u''$ is an increasing function of $a$ in $[0,1]$, it follows that
$$u''(a)\leq u''(1)=-2A +4B +8C=\frac{2r}{(1+\gamma)^2(1-r)(1-r^2)}\Psi(r),
$$
where
$$ \Psi(r)= 4r^2+2r(1-r)-(1+\gamma)(1-r^2)=(1+r)(r(3+\gamma)-(1+\gamma))
$$
which is non-positive for $r\leq r_0=(1+\gamma)/(3+\gamma)$. Thus, we obtain that $u''(a)\leq 0$ for $a\in [0,1]$
and thus, $u'(a)$ is decreasing on $[0,1]$. Therefore, for $r\leq \frac{1+\gamma}{3+\gamma}$, we have
$$u'(a)\geq u'(1)=1-2A
%=1-\frac{2}{1+\gamma}\frac{r}{1-r}
= \frac{1+\gamma-r(3+\gamma)}{(1+\gamma)(1-r)} \geq 0 ~\mbox{ for all $a\in [0,1]$}
$$
from which it follows that $u(a) \leq u(1)=1$ and this proves the stated inequality.

For the sharpness of the radius,  as in the case of Theorem \ref{EPR2-th1}, we consider the function
$$g_0(z)=\frac{a-\gamma-(1-\gamma)z}{1-a\gamma-a(1-\gamma)z}=A_0-\sum_{n=1}^\infty A_n z^{n}, ~z\in\ID,
$$
where $a\in (0,1)$, and $A_n$ ($n\geq 0$) are given by \eqref{EPR2-eq6}. Now, for a given $\gamma \in [0,1)$, we also let
$a>\gamma$.  Then
\begin{eqnarray*}
M_{g_0}(r) &=& A_0+ \sum_{n=0}^\infty A_nr^n +\left( \frac{1}{1+A_0}+ \frac{r}{1-r}\right) \sum_{n=1}^\infty  A_n^2 r^{2n}  \\
&= &  \frac{a-\gamma}{1-a\gamma}+ \frac{1-a^2}{a(1-a\gamma)} \sum_{n=1}^\infty \left( \frac{a(1-\gamma)}{1-a\gamma}\right)^nr^n\\
 &&+\left(\frac{1-a\gamma}{(1+a)(1-\gamma)}+ \frac{r}{1-r}\right)\left( \frac{1-a^2}{a(1-a\gamma)}\right)^2 \sum_{n=1}^\infty \left( \frac{a(1-\gamma)}{1-a\gamma}\right)^{2n}r^{2n} \\
&=& 1- \frac{1-a}{1-a\gamma} \Phi(r),
%&=& 1- \frac{1-a}{1-a\gamma} \Bigg[ 1+\gamma - \frac{(1+a)(1-\gamma)r}{1-a\gamma -a(1-\gamma)r} \\
%&& - \left(\frac{1-a\gamma}{(1+a)(1-\gamma)}+ \frac{r}{1-r} \right)  \frac{(1+a)(1-a^2)}{1-a\gamma} \frac{(1-\gamma )^2r^2}{(1-a\gamma)^2-a^2(1-\gamma)^2r^2} \Bigg]
\end{eqnarray*}
where
\begin{eqnarray*}
\Phi(r)&=& 1+\gamma - \frac{(1+a)(1-\gamma)r}{1-a\gamma -a(1-\gamma)r} \\
&&-\left(\frac{1-a\gamma}{(1+a)(1-\gamma)}+ \frac{r}{1-r} \right)  \frac{(1+a)(1-a^2)}{1-a\gamma} \frac{(1-\gamma )^2r^2}{(1-a\gamma)^2-a^2(1-\gamma)^2r^2}.
\end{eqnarray*}
The function $\Phi$ is strictly decreasing for $r$ in $(0,1)$ and thus, if $r>r_0$, then $\Phi(r)<\Phi(r_0) \to 0$ as $a\to 1$.
Therefore, $\Phi(r)$ is negative for $r>r_0$, as $a$ tends to 1, and hence $M_{g_0}(r)>1$ for $r>r_0$.
\hfill $\Box$

\vspace {6pt}

\subsection{Proof of Theorem \ref{EPR2-th3}}
By the definition of $\lambda$ given by \eqref{EPR2-eq3}, we have
\be\label{EPR2-eq5}
|a_n|\leq \lambda (1-|a_0|^2) ~\mbox{ for $n\geq 1$.}
\ee
For the term $S_r$ we have
\begin{eqnarray*}
\frac{S_r}{\pi} &=& \frac{1}{\pi} \iint_{|z|<r} |f'(z)|^2\, dx\,dy \le \sum_{n=1}^\infty n|a_n|^2r^{2n} \\
&\leq &\lambda ^2 (1-|a_0|^2)^2 \sum_{n=1}^\infty nr^{2n} =\lambda ^2 (1-|a_0|^2)^2 \frac{r^2}{(1-r^2)^2}.
\end{eqnarray*}
Therefore, by using the expression for $B_1(r)$ given by \eqref{EPR2-eq4} and the inequality \eqref{EPR2-eq5}, we obtain
for $r\leq 1/(1+2\lambda)$ that
\begin{eqnarray*}
B_1(r)
&\leq & |a_0| + \lambda (1-|a_0|^2)\frac{r}{1-r}
+ 2 \left( \frac{1+\lambda}{1+2\lambda}\right)^2 \lambda ^2 (1-|a_0|^2)^2\frac{r^2}{(1-r^2)^2} \\
&\leq & |a_0| + \lambda (1-|a_0|^2)\frac{1/(1+2\lambda)}{1-1/(1+2\lambda)}
\\
& &
+ 2 \left( \frac{1+\lambda}{1+2\lambda}\right)^2 \lambda ^2 (1-|a_0|^2)^2\frac{(1/(1+2\lambda)^2}{(1-(1/1+2\lambda)^2)^2} \\
&= & 1- (1-|a_0|) \left[1-\frac{1+|a_0|}{2}-  \frac{(1+|a_0|)(1-|a_0|^2)}{8} \right]\\
&= & 1-\frac{(1-|a_0|^2)}{8}F(|a_0|),
\end{eqnarray*}
%\begin{eqnarray*}
%\sum_{n=0}^\infty |a_n|r^n + 2\left( \frac{1+\lambda}{1+2\lambda}\right)^2 \, \frac{S_r}{\pi}
%&\leq & |a_0| + \lambda (1-|a_0|^2)\frac{r}{1-r} \\
%& &+ 2 \left( \frac{1+\lambda}{1+2\lambda}\right)^2 \lambda ^2 (1-|a_0|^2)^2\frac{r^2}{(1-r^2)^2} \\
%&\leq & |a_0| + \lambda (1-|a_0|^2)\frac{1/(1+2\lambda)}{1-1/(1+2\lambda)} \\
%& &+ 2 \left( \frac{1+\lambda}{1+2\lambda}\right)^2 \lambda ^2 (1-|a_0|^2)^2\frac{(1/(1+2\lambda)^2}{(1-(1/1+2\lambda)^2)^2} \\
%&\leq & 1- (1-|a_0|) \left[1-\frac{1+|a_0|}{2}-  \frac{(1+|a_0|)(1-|a_0|^2)}{8} \right]\\
%&\leq & 1,
%\end{eqnarray*}
where
$$F(x)%=x^2+\frac{8}{1+x}-4 -(1-x^2)
= \frac{8}{1+x}-5+x^2.
$$
We see that $F(0)=3$, $F(1)=0$,  $F'(x)\leq 0$ in $[0,1]$ and thus, $F(x)\geq F(1)=0$ for $x\in[0,1]$. This observation
shows that $B_1(r)\leq 1$ for $r\leq 1/(1+2\lambda)$ and the proof of the theorem is completed.
\hfill $\Box$

\vspace {6pt}

\subsection{Proof of Theorem \ref{EPR2-th4}}
The function $h$ is analytic in $\Omega_\gamma$, with $|h(z)|\leq 1$, and hence Lemma \ref{lem*} implies that
$$|a_n|\leq \frac{1-|a_0|^2}{1+\gamma}~\mbox{ for $n\geq 1$.}
$$
By \cite[Lemma 1]{KaPoSha}, the condition  $|g'(z)|\leq k|h'(z)|$ gives that
$$\sum_{n=0}^\infty |b_n|^2r^n \leq k^2\sum_{n=0}^\infty |a_n|^2r^n \text{  for  } r<1,
$$
and thus
$$\sum_{n=1}^\infty |b_n|r^n \leq \sqrt{\sum_{n=0}^\infty |b_n|^2r^n} \sqrt{\sum_{n=1}^\infty r^n}
\leq  \frac{k(1-|a_0|^2)}{1+\gamma} \frac{r}{1-r}.
$$
Therefore, with $|a_0|=a\geq 0$,
\begin{eqnarray*}
N_f(r)&=&\sum_{n=0}^\infty |a_n|r^n + \sum_{n=1}^\infty |b_n|r^n \\
&\leq & a + (1+k)\frac{1-a^2}{1+\gamma}\frac{r}{1-r}\\
&=&1-\frac{1-a}{(1+\gamma)(1-r)}\left[ 1+\gamma- r\{1+\gamma +(1+a)(1+k)\}\right] ,
\end{eqnarray*}
which is less than or equal to $1$ whenever $r\leq r_0(a)$,
where
$$r_0(a)=\frac{1+\gamma}{1+\gamma +(1+a)(1+k)}.
$$
This gives the condition
\[ r\leq \frac{1+\gamma}{3 +2k+\gamma}=r_0(1).
\]
%\leq \frac{1+\gamma}{1+\gamma +(1+a)(1+k))}=r_0(a) ~\mbox{}

For the sharpness, we consider the function $f_0=h_0+\overline{g_0}$, in $\Omega_\gamma$, where
$$h_0(z)=\frac{a-\gamma-(1-\gamma)z}{1-a\gamma -a(1-\gamma)z} =A_0-\sum_{n=1}^\infty A_n z^{n}, ~z\in\ID,
$$
where $a\in (0,1)$, and $A_n$ ($n\geq 0$) are given by \eqref{EPR2-eq6}, and
$$f_0(z)=k\lambda [h_0(z)-A_0].
$$
%\lambda \left[\frac{b-\gamma-(1-\gamma)z}{1-b\gamma -b(1-\gamma)z} - \frac{b-\gamma}{1-b\gamma}\right],$$
%
%When $z\in \mathbb{D}$, $h_0$ and $g_0$ have the Taylor representations
%$$h_0(z)=\frac{a-\gamma}{1-a\gamma}- \frac{1-a^2}{a(1-a\gamma)} \sum_{n=1}^\infty \left( \frac{a(1-\gamma)}{1-a\gamma}\right)^n z^n$$
%and
%$$g_0(z)=-\lambda \frac{1-a^2}{a(1-a\gamma)} \sum_{n=1}^\infty \left( \frac{a(1-\gamma)}{1-a\gamma}\right)^n z^n ,$$
%respectively.
Thus, we find that
\begin{eqnarray*}
N_{f_0}(r)&=&A_0+(1+k\lambda)\sum_{n=1}^\infty A_n r^{n}\\
&=&\frac{a-\gamma}{1-a\gamma}+ (1+k\lambda ) \frac{1-a^2}{a(1-a\gamma)} \sum_{n=1}^\infty \left( \frac{a(1-\gamma)}{1-a\gamma}\right)^n r^n \\
%&=&1-\frac{1-a}{1-a\gamma} \left[ 1+\gamma -(1+k\lambda)\frac{(1+a)(1-\gamma )r}{1-a\gamma -a(1-\gamma)r} \right]\\
&=&1-\frac{1-a}{1-a\gamma}\Phi (r),
\end{eqnarray*}
where
$$\Phi (r)=  1+\gamma -(1+\lambda)\frac{(1+a)(1-\gamma )r}{1-a\gamma -a(1-\gamma)r},
$$
which is strictly decreasing for $r\in [0,1)$, and hence, for $r >  r_0(1)$
$$\Phi (r)< \Phi (r_0(1)) = 1+\gamma -(1+k \lambda)\frac{(1+a)(1-\gamma ^2)}{(1-a\gamma )(3 +2k+\gamma)-a(1-\gamma ^2)},$$
which approaches $0$ as $a$ and $\lambda$ tend to $1$.
\hfill $\Box$

\end{document}